\DeclareFontFamily{U}{mathb}{\hyphenchar\font45}
\DeclareFontShape{U}{mathb}{m}{n}{
      <5> <6> <7> <8> <9> <10> gen * mathb
      <10.95> mathb10 <12> <14.4> <17.28> <20.74> <24.88> mathb12
      }{}
\DeclareSymbolFont{mathb}{U}{mathb}{m}{n}
\DeclareMathSymbol{\righttoleftarrow}{3}{mathb}{"FD}
\theoremstyle{plain}
\newtheorem{prop}{Proposition}[section]
\newtheorem{theo}[prop]{Theorem}
\newtheorem{lemm}[prop]{Lemma}
\theoremstyle{remark}
\theoremstyle{definition}
\numberwithin{equation}{section}
\newcommand{\Q}{{\mathbb Q}}
\newcommand{\R}{{\mathbb R}}
\newcommand{\Z}{{\mathbb Z}}
\newcommand{\cB}{{\mathcal B}}
\newcommand{\cM}{{\mathcal M}}
\newcommand{\cS}{{\mathcal S}}
\newcommand{\ra}{\rightarrow}
\newcommand{\bQ}{{\mathbb Q}}
\newcommand{\bR}{{\mathbb R}}
\newcommand{\bZ}{{\mathbb Z}}
\newcommand{\eqto}{\stackrel{\lower1.5pt\hbox{$\scriptstyle\sim\,$}}\to}
\newcommand{\eqdashto}{\stackrel{\lower1.5pt\hbox{$\scriptstyle\sim\,$}}\dashrightarrow}
\newcommand{\actsfromright}{\righttoleftarrow}
\begin{document}
\title[Equivariant Burnside groups]{Arithmetic properties of equivariant birational types}

\author{Andrew Kresch}
\address{
  Institut f\"ur Mathematik,
  Universit\"at Z\"urich,
  Winterthurerstrasse 190,
  CH-8057 Z\"urich, Switzerland
}
\email{andrew.kresch@math.uzh.ch}
\author{Yuri Tschinkel}
\address{
  Courant Institute,
  251 Mercer Street,
  New York, NY 10012, USA
}

\email{tschinkel@cims.nyu.edu}

\address{Simons Foundation\\
160 Fifth Avenue\\
New York, NY 10010\\
USA}

\date{December 7, 2020}

\begin{abstract}
We study arithmetic properties of equivariant birational types introduced by Kontsevich, Pestun, and the second author. 

\end{abstract}

\maketitle

\section{Introduction}
\label{sec.intro}

Let $G$ be a finite abelian group and $k$ an algebraically closed field of characteristic zero. 
Investigations of obstructions to $G$-equivariant birationality over $k$
led to the definition, in \cite{kontsevichpestuntschinkel},
of new  
invariants of actions of $G$ on algebraic varieties $X$ defined over $k$.
These invariants were further developed in \cite{BnG},
where specialization maps were defined, generalizing the ones from
the non-equivariant setting \cite{KT}.

The invariants from \cite{kontsevichpestuntschinkel} are computed on a suitable 
smooth projective model $X$, where $G$ acts regularly. To such an action one associates a class 
\begin{equation}
\label{eqn:sum}
[X\actsfromright G] := \sum_{\alpha} \beta_{\alpha},
\end{equation}
where the sum is over components of the fixed point locus $F_{\alpha}\subset X^G$, and $\beta_{\alpha}$ are the characters of $G$
appearing in the tangent bundle to a point $x_{\alpha}\in F_{\alpha}$. Equivariant birational maps can be factored into sequences of blowups (and blowdowns) of smooth $G$-stable subvarieties, thanks to Equivariant Weak Factorization. To obtain an invariant, one imposes relations 
on the formal sums in \eqref{eqn:sum}, 
of the type
$$
[\tilde{X}\actsfromright G]  - [X\actsfromright G] =0,
$$
for every equivariant blowup $\tilde{X}\to X$.

This construction motivated the introduction of two closely related quotients of
the free abelian group $\cS_n(G)$, generated by symbols 
\begin{equation}
\label{eqn:sym}
\beta=[a_1,\ldots, a_n]=[a_{\sigma(1)}, \ldots, a_{\sigma(n)}], \quad \forall \sigma \in \mathfrak S_n, 
\end{equation}
where $\beta$ is an $n$-dimensional {\em faithful} representation of $G$ over $k$, i.e., a collection of characters $a_1,\ldots, a_n$ of $G$, up to permutation, spanning the character group of $G$. 

We have a diagram
\begin{equation}
\begin{split}
\label{eqn:dia}
\xymatrix{ 
\cS_n(G)  \ar[r]^{\mathsf b} &  \cB_n(G) \ar[d]^{\mu} \\
\cS_n(G) \ar[r]^{\mathsf m}  & \cM_n(G) 
}
\end{split}
\end{equation}
Here, the projection $\mathsf b$ is the quotient by the relation: 
\begin{itemize}
\item[($\mathrm{\bf B}$)]  {\bf Blow-up:} 
for all 
$
[a_1,a_2,b_1,\ldots,b_{n-2}]\in \cS_n(G)
$
one has 
\begin{align}
\begin{split}
\label{keyrelation}
[&a_1,a_2,b_1,\ldots,b_{n-2}]  =  \\
&\begin{cases}[a_1,a_2-a_1,b_1,\ldots,b_{n-2}] +  [a_1-a_2,a_2,b_1,\ldots,b_{n-2}],&a_1\ne a_2,\\
[0,a_1,b_1,\ldots,b_{n-2}],&a_1=a_2.\end{cases}
\end{split}
\end{align}
\end{itemize}
One of the main results in \cite{kontsevichpestuntschinkel} is the following

\begin{theo}
Let $X$ be a smooth projective algebraic variety of dimension $n$ over $k$, with a regular action of $G$. 
The class 
$$
[X\actsfromright G] \in \cB_n(G)
$$
is a well-defined $G$-equivariant birational invariant. 
\end{theo}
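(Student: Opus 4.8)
The plan is to prove the statement in three movements: that \eqref{eqn:sum} really defines an element of $\cB_n(G)$, that birational invariance reduces to invariance under a single equivariant blow-up, and a local weight computation showing that this invariance is exactly what the relation $(\mathbf B)$ encodes. For \emph{well-definedness}, fix a connected component $F_\alpha$ of $X^G$ and a point $x_\alpha\in F_\alpha$. The multiset of characters of $G$ on $T_{x_\alpha}X$ is locally constant in $x_\alpha$, since the isotypic decomposition varies algebraically while the character lattice is discrete, so $\beta_\alpha$ is independent of the chosen point. The trivial characters correspond to directions along $F_\alpha$, and the remaining ones are the weights on the normal bundle $N_{F_\alpha/X}$; since $G$ acts faithfully on $X$ and fixes $F_\alpha$ pointwise, equivariant (Cartan) linearization near $x_\alpha$ forces the action on $N_{F_\alpha/X}$ to be faithful, i.e.\ these weights generate the character group. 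Hence each $\beta_\alpha$ is a legitimate faithful symbol, $\sum_\alpha\beta_\alpha\in\cS_n(G)$, and we pass to its image under $\mathsf b$ in $\cB_n(G)$.

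\emph{Reduction to one blow-up.} Two smooth projective models with regular $G$-action that are $G$-equivariantly birational are linked, by Equivariant Weak Factorization, through a chain of smooth projective $G$-models connected by blow-ups and blow-downs along smooth $G$-stable centres. As the equality to be proved is symmetric under exchanging a blow-up with its inverse, it suffices to prove $[\tilde X\actsfromright G]=[X\actsfromright G]$ in $\cB_n(G)$ for the blow-up $\pi\colon\tilde X\to X$ of a single smooth $G$-stable centre $Z\subset X$, with exceptional divisor $E=\PP(N_{Z/X})$.

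\emph{The local computation.} As $\pi$ is a $G$-isomorphism over $X\setminus Z$, the two classes agree away from $Z$: each fixed component of $X$ not contained in $Z$ contributes the same symbol via its proper transform, again by local constancy of weights. The discrepancy is therefore supported over $Z^G$. Over a component $W$ of $Z^G$ write $N_{Z/X}|_W=\bigoplus_\chi N^\chi$ for the isotypic decomposition of the normal bundle; the fixed locus of $E$ over $W$ is $\bigsqcup_\chi\PP(N^\chi)$. The trivial stratum $\PP(N^0)$ is absorbed into the proper transform of the ambient fixed component and is already matched, whereas each stratum $\PP(N^\chi)$ with $\chi\neq0$ is genuinely new; if $W$ itself is a component of $X^G$ contained in $Z$ then $N^0=0$ and its old symbol $\beta_W$ is removed. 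At a fixed point of $\PP(N^\chi)$ the tangent weights of $\tilde X$ are the weights along $W$, the weights $w-\chi$ for the remaining normal weights $w$ (the tangent directions of $\PP(N^\chi)$, contributing a trivial weight wherever $w=\chi$), and the single weight $\chi$ normal to $E$. Writing $\gamma_{W,\chi}$ for the resulting symbol and $\delta_W$ for $\beta_W$ when $W\subseteq Z$ is a fixed component and $0$ otherwise, one is reduced to showing that
\[
[\tilde X\actsfromright G]-[X\actsfromright G]=\sum_{W}\Bigl(\sum_{\chi\neq0}\gamma_{W,\chi}-\delta_W\Bigr)
\]
lies in the subgroup of $\cS_n(G)$ generated by $(\mathbf B)$.

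\emph{The combinatorial core, and the main obstacle.} This is a purely representation-theoretic identity in the weights. In codimension two with $Z$ inside the fixed locus it is literally $(\mathbf B)$: blowing up a fixed point with normal weights $a_1,a_2$ replaces $[a_1,a_2,b_1,\dots,b_{n-2}]$ by $[a_1,a_2-a_1,\dots]+[a_1-a_2,a_2,\dots]$ when $a_1\neq a_2$, and by $[0,a_1,\dots]$ when $a_1=a_2$, the exceptional line becoming pointwise fixed. The general case I would treat by induction on $\operatorname{rk}N_{Z/X}$, peeling off one normal weight at a time and applying $(\mathbf B)$ in the corresponding two-dimensional sub-picture with the remaining weights inert as the spectators $b_1,\dots,b_{n-2}$. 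I expect the main obstacle to be precisely this collapse: organising the alternating sum over the strata $\PP(N^\chi)$ so that it telescopes modulo $(\mathbf B)$. The subtle points are the presence of a nontrivial $N^0$ — when the centre is not contained in the fixed locus, so that even a codimension-two blow-up requires a chain of $(\mathbf B)$-relations rather than a single one — and the degenerate strata with repeated normal weights, where $\PP(N^\chi)$ is positive-dimensional and produces a fixed component carrying a vanishing tangent weight.
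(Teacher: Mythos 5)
Your overall architecture is the one any proof of this theorem must have, and it matches the shape of the proof in \cite{kontsevichpestuntschinkel} (note that the present paper does not prove this theorem at all; it is quoted from \emph{ibid.}): well-definedness of each symbol $\beta_\alpha$, reduction via Equivariant Weak Factorization to a single blow-up along a smooth $G$-stable center $Z$, and identification of the fixed components of $\tilde X$ as the proper transforms (absorbing the strata $\PP(N^0)$) together with the new components $\PP(N^\chi)$, $\chi\neq 0$, over components $W$ of $Z\cap X^G$. Your bookkeeping here is correct, up to one slip: the tangent weights of $\tilde X$ at a fixed point of $\PP(N^\chi)$ include \emph{all} weights of $T_wZ$, i.e., both the zero weights along $W$ and the nonzero weights normal to $W$ inside $Z$, not merely ``the weights along $W$''.

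The genuine gap is the step you yourself flag as ``the main obstacle'', which you set up but never prove: that
\[
\sum_{\chi\neq 0}\gamma_{W,\chi}-\delta_W
\]
lies in the subgroup of $\cS_n(G)$ generated by the relations $(\mathbf B)$. This is not a routine verification that can be deferred; once weak factorization is invoked, it is the entire mathematical content of the theorem, because $(\mathbf B)$ is only the codimension-two relation, while weak factorization produces centers of arbitrary codimension, not necessarily contained in $X^G$. Concretely, with normal weights $a_1,\dots,a_c$ at $w\in W$ and spectator weights $a_{c+1},\dots,a_n$, one must show: if all $a_i\neq 0$ then $\sum_{\chi\neq0}\gamma_{W,\chi}\equiv[a_1,\dots,a_n]$, and if some $a_i=0$ then $\sum_{\chi\neq0}\gamma_{W,\chi}\equiv 0$, modulo $(\mathbf B)$. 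Already for $c=3$ with generic weights this needs a chain of four applications of $(\mathbf B)$, the last one run backwards to recombine $[a_1-a_3,a_2-a_1,a_3]+[a_1-a_2,a_2-a_3,a_3]$ into $\gamma_{W,a_3}$; and in the degenerate cases (repeated weights, vanishing differences, $N^0\neq 0$) the relation changes shape, exactly as you observe. The inductive reduction of these higher-codimension identities to the codimension-two relation is precisely what appears in this paper as Lemma~\ref{lem.subdivisionr2}, quoting \cite[Prop.\ 2.1]{HKTsmall}, and as the scissor-relation computations in \cite{kontsevichpestuntschinkel}. Your plan (``peeling off one normal weight at a time'') is the right idea, but as written it is a statement of intent, not an argument, and all of the actual difficulty of the theorem lives there.
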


\

Numerical experiments revealed an interesting structure of the {\em other} quotient  of $\cS_n(G)$, 
namely, via the projection $\mathsf m$  in \eqref{eqn:dia}, which is defined as quotient by the relation: 

\begin{itemize}
\item[($\mathrm{\bf M}$)]  {\bf Modular blow-up:} 
for all 
$
[a_1,a_2,b_1,\ldots,b_{n-2}] \in  \cS_n(G)
$
one has 
\begin{align}
\begin{split}
\label{keyrelation-mod}
[&a_1,a_2,b_1,\ldots,b_{n-2}] = \\
&\,\,\,\,\,\, [ a_1,a_2-a_1,b_1,\ldots,b_{n-2}] +  [a_1-a_2,a_2,b_1,\ldots,b_{n-2}].
\end{split}
\end{align}
\end{itemize}

To distinguish, we write
$$
[a_1,\ldots, a_n], \quad \text{ respectively, }\quad \langle a_1,\ldots, a_n\rangle ,
$$ 
for the image of a generator in $\cB_n(G)$, respectively, 
the image of a generator in $\cM_n(G)$. 

When $a_1\neq a_2$, the relations are {\em identical};
the only difference is
\begin{align*}
[a_1,a_1, \ldots, a_n] & =   [a_1,0, \ldots, a_n]  \,\,\,\,\, \in \cB_n(G) \\ 
\langle a_1,a_1, \ldots, a_n\rangle & =    2\langle a_1,0,\ldots, a_n\rangle   \in \cM_n(G).
\end{align*}
The homomorphism in \eqref{eqn:dia}
\begin{equation}
\label{eqn:mu1}
\mu: \cB_n(G)\to \cM_n(G), \quad n\ge 2, 
\end{equation}
is defined on symbols by: 
$$
\mu([a_1,\ldots, a_n]) := \begin{cases} \langle a_1,\ldots, a_n\rangle & \text{ if all } a_i\neq 0,\\
 2\langle a_1,\ldots, a_n\rangle & \text{ if exactly one } a_i= 0, \\
 0 & \text{ otherwise}.
\end{cases}
$$
In \cite{kontsevichpestuntschinkel} it was shown that this map on symbols is compatible with relations.

\subsection*{Antisymmetry} 
\label{sect:anti2}

We have a diagram of homomorphisms 
\[
\xymatrix{ 
 \cB_n(G) \ar[d]_{\mu}  \ar@{>>}[r] & \cB^-_n(G) \ar[d]^{\mu^-}\\
 \cM_n(G)\ar@{>>}[r] & \cM_n^-(G)
}
\]
where the horizontal maps are projections 
to the corresponding quotients by the additional relation
\begin{equation*}
\label{eqn:anti}
[ -a_1,\ldots, a_n] =- [ a_1,\ldots, a_n],
\end{equation*}
defined only for nontrivial $G$. On symbols, the map $\mu^-$ is the same as $\mu$; 
its compatibility with defining relations is obvious.
 
\

In this note, we prove a comparison, left open in  \cite[Conjecture 8]{kontsevichpestuntschinkel}:

\begin{theo}
\label{theo:compare} 
Both homomorphisms $\mu$ and $\mu^-$ are isomorphisms, after tensoring with $\bQ$. 
\end{theo}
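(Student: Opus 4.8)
The plan is to produce an explicit $\bQ$-linear two-sided inverse to $\mu$ (and likewise to $\mu^-$), thereby avoiding any rank computation. Define $\nu\colon\cM_n(G)\otimes\bQ\to\cB_n(G)\otimes\bQ$ on symbols by $\nu(\langle a_1,\dots,a_n\rangle)=[a_1,\dots,a_n]$ when all $a_i\neq 0$, and $\nu(\langle a_1,\dots,a_n\rangle)=\tfrac12[a_1,\dots,a_n]$ when exactly one $a_i=0$. Every symbol with at least two vanishing characters is already $0$ in $\cM_n(G)$: applying relation $(\mathbf M)$ with $a_1=a_2=0$ forces $\langle 0,0,b_1,\dots,b_{n-2}\rangle=2\langle 0,0,b_1,\dots,b_{n-2}\rangle$, and by the symmetry of symbols this covers every placement of the zeros, so no further values of $\nu$ need be specified. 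Granting that $\nu$ is well defined, the composite $\mu\circ\nu$ is the identity on generators by inspection, the factor $2$ in the definition of $\mu$ on one-zero symbols being exactly cancelled by the factor $\tfrac12$; hence $\mu\otimes\bQ$ is surjective. The reverse composite $\nu\circ\mu$ is the identity on the no-zero and one-zero generators for the same reason, so the whole theorem reduces to two points: (A) every symbol with at least two vanishing characters is $0$ in $\cB_n(G)\otimes\bQ$; and (B) the map $\nu$ respects relation $(\mathbf M)$.

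I expect point (B) to be a finite, essentially bookkeeping verification. Relations $(\mathbf B)$ and $(\mathbf M)$ coincide whenever $a_1\neq a_2$, so the only case requiring attention is the modular blow-up with $a_1=a_2$, where $(\mathbf M)$ reads $\langle a,a,\vec b\rangle=2\langle 0,a,\vec b\rangle$ while $(\mathbf B)$ reads $[a,a,\vec b]=[0,a,\vec b]$. Applying $\nu$ and tracking how many characters vanish in each term, the discrepancy is absorbed precisely by the coefficient $\tfrac12$ attached to one-zero symbols, using (A) to discard any term with two or more zeros; one carries out the analogous check for the generic relation, splitting according to whether the spectator characters $b_i$ include a zero.

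The crux, and the step I expect to be the main obstacle, is (A). Two derived relations in $\cB_n(G)$ provide the starting point. First, applying $(\mathbf B)$ to the pair $(0,a)$ in a valid symbol $[0,a,\vec b]$ yields $[a,-a,\vec b]=0$, so canceling pairs vanish. Second, a short manipulation of $[a,a,\vec b]$, expanding it both by the diagonal rule and by splitting a second coordinate, produces a collapse identity $[p+q,p,q,\vec b]=[0,p,q,\vec b]$, allowing a character equal to the sum of two others to be replaced by $0$. The plan is then to show, by induction on $n$ and on the order of $G$, that these identities together with the permutation symmetry and the defining relations force every symbol with two zeros to be rationally trivial; reducing via the primary decomposition of $G$ to the case of a cyclic $p$-group should keep the induction manageable, and inverting $2$ is what ultimately allows the diagonal contributions to be eliminated. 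I anticipate that establishing (A) uniformly in $G$, rather than in the ad hoc manner available for small groups where one simply chains the collapse and canceling-pair relations until the two-zero symbol disappears, is where the real work lies.

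Finally, for $\mu^-$ the same inverse applies: the antisymmetry relation $[-a_1,\dots,a_n]=-[a_1,\dots,a_n]$ is visibly compatible with $\nu$, so $\nu$ descends to a map $\cM_n^-(G)\otimes\bQ\to\cB_n^-(G)\otimes\bQ$, and (A) passes to the quotient $\cB_n^-(G)$. The verifications $\mu^-\circ\nu=\mathrm{id}$ and $\nu\circ\mu^-=\mathrm{id}$ are then identical to the non-antisymmetrized case, completing the proof that both $\mu$ and $\mu^-$ are isomorphisms after tensoring with $\bQ$.
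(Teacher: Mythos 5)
Your reduction of Theorem \ref{theo:compare} to the two points (A) and (B) is correct, and it is essentially the same reduction the paper makes: the inverse $\nu$ you write down is the map implicitly used there, (B) is bookkeeping once (A) is available, and your two derived identities $[a,-a,\vec b]=0$ and $[p+q,p,q,\vec b]=[0,p,q,\vec b]$ are valid consequences of $(\mathbf B)$. The problem is that (A) --- the rational vanishing in $\cB_n(G)$ of every symbol with two zero entries --- is precisely the content of Conjectures 8 and 9 of \cite{kontsevichpestuntschinkel}, and you leave it as a plan rather than a proof. In the paper, (A) is Theorem \ref{prop:001N}(ii), and it is deduced from the key technical statement (Theorem \ref{theo:pn}) that $\delta=[a,0,\dots]+[-a,0,\dots]$ is a torsion class in $\cB_n(\bZ/N\bZ)$: granting that, one substitutes the spectator entry $c=a$ and applies exactly your two identities ($[a,0,a,\dots]=[0,0,a,\dots]$ and $[-a,0,a,\dots]=0$). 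So your outline and the paper's argument coincide from that point onward; what is missing is any proof of the torsion statement itself, which is where all the work of the paper's Section 2 lies.

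Moreover, the route you sketch for (A) --- induction on $n$ and on $|G|$, reduction to cyclic $p$-groups, then chaining the cancellation and collapse identities with $2$ inverted --- does not contain the idea that makes the proof work, and there is a concrete reason to doubt it can be completed as stated: the paper shows only that $\delta$ is annihilated by $(p^2-1)/24$ for $p\ge 7$, a bound that grows with $p$ and need not be a power of $2$ (for $p=11$ it equals $5$), so inverting $2$ cannot be ``what ultimately allows'' the elimination, and no finite chain of your integral identities is known to kill the class at all. The actual mechanism is a global averaging over the group: one sums $[a,\beta a]$ over all $a\in(\Z/p\Z)^\times$ and over $\beta$, partitions the values of $\beta$ into orbits of the order-three map $\beta\mapsto-\beta^{-1}-1$ and, separately, into sets $\{\pm\beta^{\pm 1}\}$, and compares the resulting coefficient identities to conclude that both $(p^2-1)/12$ and $(p^2-1)/8$ annihilate $\delta$; for composite $N$ one uses instead that a suitable normalization $\delta(a,b)$ of $[a,b]+[-a,b]+[a,-b]+[-a,-b]$ is $\mathrm{SL}_2(\bZ/N\bZ)$-invariant and that its full sum $S$ over all $(a,b)$ satisfies $S=2S$, hence $S=0$. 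None of this averaging appears in your outline, so the crux of the theorem remains unproven. (A side remark: for $\mu^-$ you do not need (A) at all --- in the antisymmetric quotient, $[0,a_2,\dots]=-[0,a_2,\dots]$ forces every symbol containing a zero to die rationally, which is the paper's one-line proof in that case.)
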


This implies that  
the main constructions connected with $\cM_n(G)$, from Sections 4,5,6, and 9 of 
\cite{kontsevichpestuntschinkel}, also apply to 
$\cB_n(G)\otimes \bQ$. We briefly sketch these structures:
\begin{itemize}
\item {\bf Lattices and cones:} elements $\langle a_1,\ldots, a_n\rangle\in \cM_n(G)$
can be  identified with isomorphism classes of triples
\begin{equation}
\label{eqn:chi}
(\mathbf{L}, \chi, \Lambda), 
\end{equation}
where $\mathbf L=\bZ^n$ is a lattice, $\chi\in \mathbf{L}\otimes A$, and $\Lambda\subset \mathbf{L}\otimes \bR$ is a 
basic simplicial cone.
Here $A$ denotes the character group of $G$, and by a basic simplicial cone
we mean one that is spanned by a basis of $\mathbf{L}$.
Concretely, choosing a basis $e_1,\ldots, e_n$ of lattice vectors spanning $\Lambda$, one can write
$$
\chi=\sum_{i=1}^n\, e_i\otimes a_i, \quad a_i\in A, 
$$ 
and put
$$
(\mathbf{L}, \chi, \Lambda) \mapsto \langle a_1,\ldots, a_n\rangle.
$$
Changing the basis spanning $\Lambda$ permutes the entries $a_1,\ldots, a_n$, and
relation $(\mathbf M)$ arises from decompositions of a simplicial cone into simplicial subcones.
We will discuss this in more detail in Section~\ref{sect:lattice}. 
\item {\bf Operations:} Given an exact sequence of groups
$$
0\ra G' \ra G\ra G''\ra 0
$$ 
there is a $\bZ$-bilinear {\em multiplication} homomorphism
$$
\nabla: \cM_{n'}(G')\otimes \cM_{n''}(G'') \ra \cM_{n'+n''}(G), \quad n',n''\ge 1, 
$$
which descends to the antisymmetric versions, as well as a {\em co-multiplication} homomorphism
$$
\Delta: \cM_{n'+n''}(G) \ra \cM_{n'}(G')\otimes \cM_{n''}^-(G''),
$$
(the minus on the second factor is not an error), 
which also comes with an antisymmetric version
$$
\Delta^-: \cM_{n'+n''}^-(G) \ra \cM_{n'}^-(G')\otimes \cM_{n''}^-(G''). 
$$
These homomorphisms allow to decompose $\cM_n(G)$ into {\em primitive} pieces, and reveal a rich internal structure. 
\item {\bf Hecke operators:} 
 The lattice-theoretic interpretation of $\cM_n(G)$ leads to the definition of commuting 
operators 
$$
T_{\ell,r}:  \cM_n(G) \otimes \bQ \to \cM_n(G)
$$
for all $1\le r\le n-1$ and primes $\ell$ not dividing the order of $G$. 
By Theorem~\ref{theo:compare}, the groups $\cB_n(G)\otimes \bQ$ also carry Hecke operators. 
\item {\bf Cohomology of arithmetic groups:} Let 
$$
\Gamma(G,n)\subset \mathrm{GL}_n(\bZ)
$$
be the stabilizer of $\chi$ in \eqref{eqn:chi}. Let 
\begin{itemize}
\item $\mathcal F_n$  be the $\bQ$-vector space generated by characteristic functions
of convex finitely generated rational polyhedral cones $\Lambda\subset \bR^n$, 
modulo those of dimension $\le n-1$,
\item $\mathrm{St}_n$ be the {\em Steinberg}-module, and 
\item $\mathrm{or}_n$ be the {\em sign of the determinant} module.
\end{itemize}
By \cite[Prop. 22]{kontsevichpestuntschinkel} and Theorem~\ref{theo:compare}, we
have a commutative diagram
\[
\xymatrix{
\mathcal M_n(G)\otimes \bQ  \ar@{>>}[r] \ar[d]_{\simeq} & \mathcal M_n^-(G)\otimes \bQ \ar[d]^{\simeq}  \\
H_0(\Gamma(G,n), \mathcal F_n)  \ar@{>>}[r] & H_0(\Gamma(G,n), \mathrm{St}_n\otimes \mathrm{or}_n)  \\
\mathcal B_n(G)\otimes \bQ  \ar@{>>}[r]  \ar[u]^{\simeq} & \mathcal B_n^-(G)\otimes \bQ \ar[u]_{\simeq},
}
\]
\end{itemize}

The connection between the groups $\cB_n(G)\otimes \bQ$, encoding invariants of abelian actions 
on algebraic varieties, and the theory of automorphic forms, via cohomology of congruence subgroups, 
seems intriguing to us. However, given the link between $\cB_n(G)$ and $\cM_n(G)$ it is natural
to seek a lattice theoretic interpretation of $\cB_n(G)$ as well. This is done in Section~\ref{sect:lattice}. One of the byproducts is
the definition of Hecke operators 
$$
T_{\ell,r} : \cB_n(G)\to \cB_n(G),
$$
where $\ell$ is a prime not dividing the order of $G$ and $1\le r\le n-1$, 
over the integers.

\medskip
\noindent
\textbf{Acknowledgments:}
We are very grateful to Brendan Hassett for his interest  and help on this and related projects. 
The first author was partially supported by the
Swiss National Science Foundation. 
The second author was partially supported by NSF grant 2000099.

\section{Comparison}
\label{sect:str}
We continue to assume that $G$ is a finite abelian group. 
This section is closely related to \cite[Sections 3, 5, and 11]{kontsevichpestuntschinkel}. 
In particular, we settle Conjecture 8 from {\em ibid}, asserting that
$$
\cB_n(G)\otimes \bQ\simeq \cM_n(G)\otimes \bQ.
$$ 

Our first result is a refinement of \cite[Prop.\ 3.2]{HKTsmall}.

\begin{theo}
\label{theo:pn}
Let $n\ge 2$. \\
$\mathrm{(i)}$ Let $p$ be a prime and $a\in (\bZ/p\bZ)^\times$.
The class
\[
[a,0, \ldots]+[-a,0, \ldots ]  \in \cB_n(\bZ/p\bZ)
\]
is zero when $p\le 5$, and is annihilated by $(p^2-1)/24$ when $p\ge 7$. \\
$\mathrm{(ii)}$ Let $N>1$ be an integer and $a\in (\bZ/N\bZ)^\times$.
Then
$$
[a,0, \ldots]+[-a,0, \ldots ]  \in \cB_n(\bZ/N\bZ)_{\rm tors},
$$
the subgroup of torsion elements.
\end{theo}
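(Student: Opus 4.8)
The plan is to reduce to $n=2$ and then pin down the torsion of $\cB_2(\bZ/p)$ explicitly. The assignment $[a_1,a_2]\mapsto[a_1,a_2,0,\ldots,0]$ sends faithful symbols to faithful symbols (the characters still span $\bZ/N$) and carries relation $(\mathbf B)$ to the instance of $(\mathbf B)$ applied to the first two slots; it therefore defines a homomorphism $\cB_2(\bZ/N)\to\cB_n(\bZ/N)$ taking $[a,0]+[-a,0]$ to $[a,0,\ldots]+[-a,0,\ldots]$. Hence any integer annihilating the class, and likewise torsionness, transfers from $n=2$ to all $n$, and it suffices to work in $\cB_2$.

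\emph{Structure of $\cB_2(\bZ/p)$.} Using $[a,a]=[a,0]$ I would eliminate the symbols with a zero entry, presenting $\cB_2(\bZ/p)$ on generators $[a,b]$ with $a,b\in(\bZ/p)^\times$, subject to $(\mathbf B)$ together with $[a,-a]=0$ (the latter being $(\mathbf B)$ applied to $[a,0]$). The automorphisms $x\mapsto tx$ of $\bZ/p$ act on $\cB_2(\bZ/p)$ and carry $c_a:=[a,0]+[-a,0]$ to $c_{ta}$, so all the $c_a$ share a common order and I may set $a=1$. Relation $(\mathbf B)$ is the Farey mediant subdivision $[1,t]=[1,t-1]+[1-t,t]$; telescoping over $t=2,\ldots,p-1$ and using $[1,-1]=0$ and $[1,1]=[1,0]$ expresses $[1,0]$ as a sum of off-diagonal symbols. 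Telescoping by itself is circular (it only returns $c_1=c_1$), so the torsion must come from the holonomy of the mediant relations once the indices are read modulo $p$.

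\emph{The constant.} To compute that holonomy I would match it with Dedekind sums: the reciprocity law for the Dedekind sum $s(\cdot,\cdot)$ is exactly the cocycle identity attached to a mediant subdivision, so a suitably normalized Dedekind sum descends to a homomorphism out of $\cB_2(\bZ/p)$ detecting the torsion class. Its value on $c_1$ is controlled by the reciprocity defect, namely by $\frac14\sum_{a=1}^{p-1}\frac{a(p-a)}{p}=\frac{p^2-1}{24}$, a combination of values of $B_2$ (equivalently of $\zeta(-1)=-\tfrac1{12}$, which supplies the $12$), with one further factor $2$ coming from the symmetrization $[a,0]+[-a,0]$ (the $24$). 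I expect the torsion subgroup of $\cB_2(\bZ/p)$ to be identified, through this modular/Dedekind picture, with a cuspidal class group of order $(p^2-1)/24$, which yields the annihilation in~(i); the small primes $p=2,3,5$ I would settle by the finite linear computation above, where the relations already force $c_1=0$ (consistently, $(p^2-1)/24=1$ at $p=5$).

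\emph{Part (ii) and the main obstacle.} For general $N$ the same mediant/reciprocity cocycle produces \emph{some} nonzero integer annihilating $c_a$, which is all that (ii) requires, and one need not evaluate it. The real difficulty, and the whole point of the refinement, is the exact constant in~(i): extracting the mod-$p$ holonomy and showing that $c_1$ is annihilated by precisely $(p^2-1)/24$, rather than by some divisor or multiple, is where the Dedekind-sum reciprocity and the Bernoulli-number bookkeeping carry the argument.
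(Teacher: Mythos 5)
Your reduction to $n=2$ is correct and worth making explicit: appending zero characters preserves faithfulness and carries the generating instances of relation ($\mathbf{B}$) in $\cS_2$ to instances of ($\mathbf{B}$) in $\cS_n$ with all spectator entries equal to $0$, so it induces $\cB_2(\bZ/N\bZ)\to\cB_n(\bZ/N\bZ)$; the paper uses this silently, since all of its lemmas live in $\cB_2(\Z/p\Z)$. Beyond that point, however, your sketch has a genuine gap, and it is a gap of logical direction, not just of missing detail. A normalized Dedekind sum, or a map to a cuspidal class group, would be a homomorphism \emph{out of} $\cB_2(\bZ/p\bZ)$; such a map can only \emph{detect} the class $c_1=[1,0]+[-1,0]$, i.e., bound its order from \emph{below}. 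The theorem asserts annihilation, an \emph{upper} bound on the order, and an upper bound can only come from exhibiting explicit consequences of the relations ($\mathbf{B}$) inside $\cB_2(\bZ/p\bZ)$. Your stronger formulation --- that the torsion subgroup of $\cB_2(\bZ/p\bZ)$ \emph{is} a cuspidal class group of order $(p^2-1)/24$ --- would suffice, but it is offered only as an expectation: you never verify that the Dedekind sum is compatible with ($\mathbf{B}$), never construct the claimed identification, and never even show that $c_1$ is torsion. The same objection applies to your part (ii): the phrase ``the reciprocity cocycle produces some nonzero integer annihilating $c_a$'' has no argument behind it, and a cocycle/homomorphism cannot produce annihilation.

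For contrast, the paper's proof is elementary and entirely internal to $\cB_2$. It first proves $[a,b]+[a,-b]=[a,0]$ for $a,b\in(\Z/p\Z)^\times$ by induction along $b=ma$ (Lemma \ref{lemm:72}), deduces that $\delta=[a,0]+[-a,0]$ is independent of $a$ (Lemma \ref{lemm:73}), and then evaluates sums of the form $\sum_\beta\sum_a [a,\beta a]$ in two ways. Grouping the $\beta$'s into orbits of $\beta\mapsto\beta^{-1}$ together with the order-three symmetry $\beta\mapsto-\beta^{-1}-1$, $\beta\mapsto-(\beta+1)^{-1}$ yields $\tfrac{(p-1)(p-2)}{6}\delta=\tfrac{(p-1)(p-5)}{12}\delta$, hence $\tfrac{p^2-1}{12}\delta=0$; grouping instead into orbits of $\beta\mapsto-\beta$ and $\beta\mapsto\beta^{-1}$ yields $\tfrac{p^2-1}{8}\delta=0$; the gcd of the two annihilators is $(p^2-1)/24$. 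For (ii), the paper forms $\delta(a,b)=\langle a,b\rangle+\langle -a,b\rangle+\langle a,-b\rangle+\langle -a,-b\rangle$, checks that it satisfies the modular relation ($\mathbf{M}$) and is $\mathrm{SL}_2(\bZ/N\bZ)$-invariant, and observes that the total sum $S=\sum_{a,b}\delta(a,b)$ satisfies $S=2S$, hence $S=0$ and each $\delta(a,b)$ is killed by the number of summands; setting $b=0$ gives the claim. If you want to salvage your outline, this is what ``extracting the holonomy of the mediant relations'' must mean in practice: closed identities among symbols plus a counting or averaging argument over the group, not an appeal to the modular interpretation.
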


We start with a sequence of technical lemmas.

\begin{lemm}
\label{lemm:72}
For $a$, $b\in (\Z/p\Z)^\times$ we have
\[ [a,b]+[a,-b]=[a,0]  \in \cB_2(\Z/p\Z).
\]
\end{lemm}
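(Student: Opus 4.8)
The plan is to work entirely inside $\cB_2(\Z/p\Z)$ and exploit the blow-up relation $(\mathbf{B})$ as a recursion on the pair of entries. Since the statement is in dimension $n=2$, every symbol $[a,b]$ records a faithful two-character representation, and the relation $(\mathbf{B})$ rewrites $[a_1,a_2]$ (with $a_1\ne a_2$) as $[a_1,a_2-a_1]+[a_1-a_2,a_2]$, while $[a,a]=[a,0]$. The target identity $[a,b]+[a,-b]=[a,0]$ should follow by setting up the right specialization of these moves, so first I would apply $(\mathbf{B})$ to each of the two symbols on the left-hand side and look for cancellation.

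Concretely, I would apply the blow-up relation to $[a,b]$ to get $[a,b-a]+[a-b,b]$, and to $[a,-b]$ to get $[a,-b-a]+[a+b,-b]$. The idea is that after summing, the ``cross'' terms should pair up and telescope against what one gets from repeatedly expanding $[a,0]$ or from the symmetry $[x,y]=[y,x]$ built into the symbols \eqref{eqn:sym}. I expect the cleanest route is to treat $b$ as a free parameter in $(\Z/p\Z)^\times$ and show that the function $f(b):=[a,b]+[a,-b]-[a,0]$ satisfies a relation forcing it to be constant (or to vanish) as $b$ ranges over the group; the blow-up moves give a recursion in $b$, and using that $a$ is fixed and invertible one can normalize, e.g.\ by a change of the second coordinate.

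The main obstacle I anticipate is bookkeeping the degenerate cases where one of the transformed entries hits $0$ or where two entries coincide, since then $(\mathbf{B})$ switches to its second branch $[a_1,a_1,\ldots]=[0,a_1,\ldots]$; these special values of $b$ (namely $b=\pm a$, or $b$ making $a\pm b=0$) need separate, careful treatment so that the telescoping sum actually closes up rather than leaving uncancelled boundary terms. I would handle this by first proving the identity for generic $b$ and then checking the finitely many degenerate residues directly. A useful auxiliary observation, which I would establish early, is that $[0,c]$ for $c\in(\Z/p\Z)^\times$ behaves as a ``boundary'' symbol: expanding it via $(\mathbf{B})$ and using permutation symmetry should pin down enough linear relations among the $[a,0]$ to make the cross-terms from the two expansions coincide. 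Summing over all $b$, or choosing a clever single value, then yields $[a,b]+[a,-b]=[a,0]$ as an identity in $\cB_2(\Z/p\Z)$.
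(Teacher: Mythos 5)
Your overall strategy---use the blow-up relation as a recursion in $b$ along multiples of $a$---is in fact the paper's strategy, but your proposal has a genuine gap at its central step: the telescoping you hope for does not occur with the expansion you propose. Expanding both symbols at the same level gives
\[
[a,b]+[a,-b]=[a,b-a]+[a-b,b]+[a,-b-a]+[a+b,-b],
\]
and the two cross terms $[a-b,b]$ and $[a+b,-b]$ are distinct symbols with no cancellation between them; no amount of relations among the boundary symbols $[x,0]$ will make this sum collapse without already knowing the lemma. The missing idea is to \emph{stagger} the two expansions so that they produce the \emph{same} cross term. Writing $b=ma$, apply $(\mathbf{B})$ to $[a,(m+1)a]$ and to $[a,-ma]$:
\[
[a,(m+1)a]=[a,ma]+[(m+1)a,-ma],
\qquad
[a,-ma]=[a,-(m+1)a]+[(m+1)a,-ma].
\]
Subtracting kills the common term $[(m+1)a,-ma]$ and yields
$[a,(m+1)a]+[a,-(m+1)a]=[a,ma]+[a,-ma]$; that is, $g(b):=[a,b]+[a,-b]$ is invariant under $b\mapsto b+a$. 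Since $a$ is invertible, every $b\in(\Z/p\Z)^\times$ is a multiple of $a$, so the lemma reduces to a single value of $b$. This is precisely the induction on $m$ in the paper's proof.

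The second gap is the base case, which you defer to ``careful treatment'' of degenerate residues but never supply: one needs $g(a)=[a,a]+[a,-a]=[a,0]$, i.e.\ $[a,a]=[a,0]$ (the second branch of $(\mathbf{B})$ together with the permutation symmetry) and, crucially, $[a,-a]=0$. The latter is where your auxiliary observation about $[0,c]$ would actually pay off if carried out: applying the first branch of $(\mathbf{B})$ to $[0,c]$ (legitimate, since $0\ne c$) gives $[0,c]=[0,c]+[-c,c]$, hence $[-c,c]=0$. With the staggered recursion and this base case the induction closes and reproduces the paper's proof; as written, however, your proposal is a plan whose key cancellation fails and whose degenerate cases are left unresolved, and the fallback ideas (summing over all $b$, or showing $f(b)$ is constant by some unspecified relation) are not developed into arguments.
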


\begin{proof}
We write $b=ma$ with $m\in \{1,\dots,p-1\}$ and
proceed by induction on $m$.
The base case $m=1$ is clear, since
\[ [a,a]=[a,0]\qquad\text{and}\qquad [a,-a]=0. \]
The induction hypothesis, in combination with
\[ [a,(m+1)a]=[a,ma]+[(m+1)a,-ma] \]
and
\[ [a,-ma]=[a,-(m+1)a]+[(m+1)a,-ma], \]
gives the inductive step.
\end{proof}

\begin{lemm}
\label{lemm:73}
For $a$, $b\in (\Z/p\Z)^\times$ we have
\[ [a,0]+[-a,0]=[a,b]+[a,-b]+[-a,b]+[-a,-b] \]
in $\cB_2(\Z/p\Z)$, and this element is independent of $a$ and $b$.
\end{lemm}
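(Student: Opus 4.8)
The plan is to derive both assertions directly from Lemma~\ref{lemm:72}, using in addition the permutation symmetry of the symbols recorded in \eqref{eqn:sym}, namely that $[x,y]=[y,x]$.

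First, to establish the displayed identity, I would apply Lemma~\ref{lemm:72} to the two pairs $(a,b)$ and $(-a,b)$; both pairs consist of units, since $a\in(\Z/p\Z)^\times$ forces $-a\in(\Z/p\Z)^\times$, and $b$ is a unit. This yields
\[
[a,b]+[a,-b]=[a,0]\qquad\text{and}\qquad [-a,b]+[-a,-b]=[-a,0],
\]
and summing the two equations gives exactly
\[
[a,0]+[-a,0]=[a,b]+[a,-b]+[-a,b]+[-a,-b].
\]

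For the independence claim, the key observation is that the four-term sum $S(a,b):=[a,b]+[a,-b]+[-a,b]+[-a,-b]$ is invariant under interchanging $a$ and $b$. Indeed, since the symbols are unordered, we have $[a,b]=[b,a]$, $[a,-b]=[-b,a]$, $[-a,b]=[b,-a]$, and $[-a,-b]=[-b,-a]$, so term by term $S(a,b)=S(b,a)$. Running the computation of the previous paragraph with the roles of $a$ and $b$ reversed (applying Lemma~\ref{lemm:72} now to the pairs $(b,a)$ and $(-b,a)$, again all units) shows $S(b,a)=[b,0]+[-b,0]$. Combining these gives
\[
[a,0]+[-a,0]=S(a,b)=S(b,a)=[b,0]+[-b,0],
\]
so the common value does not depend on the choice of unit. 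Independence of $b$ is immediate from the identity itself, whose left-hand side contains no $b$.

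I do not expect a genuine obstacle here: the first part is a two-line application of the lemma, and the only mild subtlety is noticing the $a\leftrightarrow b$ symmetry of $S(a,b)$ that forces independence of the parameter. The one point to verify carefully is that all four arguments $\pm a,\pm b$ remain units, which is automatic, so that Lemma~\ref{lemm:72} applies legitimately in both orderings.
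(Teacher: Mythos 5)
Your proposal is correct and matches the paper's argument: the paper likewise gets the displayed identity by applying Lemma~\ref{lemm:72} to the pairs $(a,\pm b)$ and $(-a,\pm b)$ and summing, and then deduces independence from the $a\leftrightarrow b$ symmetry of the right-hand side combined with the fact that the left-hand side involves only $a$. Your version just spells out the symmetry step ($S(a,b)=S(b,a)=[b,0]+[-b,0]$) slightly more explicitly than the paper does.
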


\begin{proof}
The equality holds by Lemma \ref{lemm:72}.
The right-hand side is symmetric in $a$ and $b$ and, by the equality,
is independent of $b$.
Hence it is also independent of $a$.
\end{proof}

\begin{lemm}
\label{lemm:74}
For $a$, $b\in (\Z/p\Z)^\times$ with $a+b\ne 0$, we have
\[ [a,0]=[a,b]+[-b,a+b]+[-a-b,a] \in \cB_2(\Z/p\Z).
\]
\end{lemm}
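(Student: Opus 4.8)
The plan is to reduce the claimed three-term identity to a single application of the blow-up relation $(\mathbf B)$, using Lemma~\ref{lemm:72} to absorb the first term on the right-hand side. Concretely, I would first rewrite the symbol $[a,b]$ by Lemma~\ref{lemm:72} as $[a,b]=[a,0]-[a,-b]$ in $\cB_2(\Z/p\Z)$. Substituting this into the right-hand side of the asserted equality turns the goal $[a,0]=[a,b]+[-b,a+b]+[-a-b,a]$ into the equivalent statement
\[
[a,-b]=[-b,a+b]+[-a-b,a].
\]

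The second step is to recognize the right-hand side of this reduced identity as a direct instance of $(\mathbf B)$. Using the symmetry of symbols I would replace $[a,-b]$ by $[-b,a]$ and then apply $(\mathbf B)$ to $[-b,a]$ with $a_1=-b$ and $a_2=a$; since $a_2-a_1=a+b$ and $a_1-a_2=-a-b$, this yields exactly $[-b,a]=[-b,a+b]+[-a-b,a]$. Feeding this back through the substitution from the first step collapses the right-hand side to $[a,0]$, which is the assertion.

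The only hypothesis that must be watched is the one needed to invoke $(\mathbf B)$ in its two-argument form, namely that the two arguments be distinct, i.e.\ $-b\neq a$, which is precisely the standing assumption $a+b\neq 0$. I would also note in passing that every symbol occurring is faithful, since $a,b\in(\Z/p\Z)^\times$ together with $a+b\neq 0$ guarantee a nonzero entry throughout. I do not expect a genuine obstacle here: the argument is a short chain of rewrites (Lemma~\ref{lemm:72}, symmetry, one use of $(\mathbf B)$), and the condition $a+b\neq 0$ is exactly what makes that single application of $(\mathbf B)$ legitimate, so no separate degenerate-case analysis should be needed.
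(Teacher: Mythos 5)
Your proof is correct and is essentially the paper's own argument: both rest on the single blow-up relation $[a,-b]=[-b,a+b]+[-a-b,a]$ (valid because $a+b\neq 0$) combined with Lemma~\ref{lemm:72}, differing only in whether Lemma~\ref{lemm:72} is substituted first or added at the end.
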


\begin{proof}
This follows from
\[ [a,-b]=[-b,a+b]+[-a-b,a] \]
and $[a,b]+[a,-b]=[a,0]$.
\end{proof}

Lemma \ref{lemm:73} tells us that
\begin{equation}
\label{eqn.delta}
\delta:=[a,0]+[-a,0]\in \cB_2(\Z/p\Z)
\end{equation}
is independent of $a\in (\Z/p\Z)^\times$.

\begin{lemm}
For $a$, $b\in (\Z/p\Z)^\times$ with $a+b\ne 0$, we have
\begin{align*}
\delta&=[a,b]+[-b,a+b]+[-a-b,a] \\
&\,\,\,\,\,\,\,\,+[-a,-b]+[b,-a-b]+[a+b,-a] \in \cB_2(\Z/p\Z).
\end{align*}
\end{lemm}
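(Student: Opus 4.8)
The plan is to recognize that the six terms on the right-hand side split into two triples, each of which is a direct instance of the decomposition in Lemma~\ref{lemm:74}. First I would observe that the first triple $[a,b]+[-b,a+b]+[-a-b,a]$ is literally the right-hand side of Lemma~\ref{lemm:74}, and hence equals $[a,0]$.

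The second step is to apply Lemma~\ref{lemm:74} a second time, with $a$ and $b$ replaced by $-a$ and $-b$. The hypotheses continue to hold: since $a,b\in(\Z/p\Z)^\times$ we have $-a,-b\in(\Z/p\Z)^\times$, and the condition $(-a)+(-b)=-(a+b)\neq 0$ holds precisely because $a+b\neq 0$. Unwinding the substitution, the identity reads $[-a,0]=[-a,-b]+[b,-a-b]+[a+b,-a]$, whose right-hand side is exactly the second triple appearing in the statement.

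Finally, adding these two identities and invoking the definition $\delta=[a,0]+[-a,0]$ from \eqref{eqn.delta} yields the claim. The only point requiring any attention is spotting the sign-flip substitution $(a,b)\mapsto(-a,-b)$ that exhibits the second triple as $[-a,0]$; once this is seen the computation is immediate, and there are no well-definedness or convergence issues to address.
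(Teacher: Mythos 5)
Your proposal is correct and is exactly the paper's own argument: both apply Lemma~\ref{lemm:74} to $(a,b)$ and to $(-a,-b)$, add the two identities, and recognize $\delta=[a,0]+[-a,0]$. The only difference is that you spell out the verification of the hypotheses for the sign-flipped pair, which the paper leaves implicit.
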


\begin{proof}
We add together
\[ [a,0]=[a,b]+[-b,a+b]+[-a-b,a] \]
and
\[ [-a,0]=[-a,-b]+[b,-a-b]+[a+b,-a], \]
and recognize $\delta$ on the left-hand side.
\end{proof}

\begin{lemm}
We have in $\cB_2(\Z/p\Z)$:
\begin{align*}
\sum_{a\in (\Z/p\Z)^\times} [a,a]&=\frac{p-1}{2}\delta, \\
\sum_{a\in (\Z/p\Z)^\times} [a,-2a]&=0.
\end{align*}
\end{lemm}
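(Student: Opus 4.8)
The two identities I need to establish are
\[
\sum_{a\in (\Z/p\Z)^\times} [a,a]=\frac{p-1}{2}\delta
\qquad\text{and}\qquad
\sum_{a\in (\Z/p\Z)^\times} [a,-2a]=0
\]
in $\cB_2(\Z/p\Z)$. The guiding principle for both is that summing a symbol over all units $a$ produces something that is highly symmetric, and I can exploit the substitution $a\mapsto -a$ together with the relations $[a,a]=[a,0]$ and $[a,-a]=0$ coming from relation $(\mathbf B)$ and Lemma~\ref{lemm:72}.

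For the first identity, the plan is to use $[a,a]=[a,0]$ directly, so that
\[
\sum_{a\in (\Z/p\Z)^\times} [a,a]=\sum_{a\in (\Z/p\Z)^\times} [a,0].
\]
Now I pair the units into the $(p-1)/2$ two-element sets $\{a,-a\}$; each pair contributes $[a,0]+[-a,0]=\delta$ by the definition \eqref{eqn.delta} of $\delta$. Since there are exactly $(p-1)/2$ such pairs, the sum equals $\frac{p-1}{2}\delta$, as claimed. The only point requiring care is that $a\ne -a$ for all $a$, which holds because $p$ is odd; this is implicit in working with $(\Z/p\Z)^\times$ for $p$ an odd prime.

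For the second identity, the idea is to apply the blow-up relation to the symbol $[a,-2a]$ in a way that lets me reindex and cancel. Writing $a_1=a$ and $a_2=-2a$ (so $a_1\ne a_2$ since $p\ne 3$ must be handled, but for $p=3$ the claim can be checked directly), relation $(\mathbf B)$ gives
\[
[a,-2a]=[a,-3a]+[3a,-2a].
\]
The strategy is to sum this over all $a\in(\Z/p\Z)^\times$ and observe that the substitutions $a\mapsto 3a$ and $a\mapsto -a$ permute the units, so the resulting sums on the right are reindexed versions of sums of the same shape; combining these together with the antisymmetry-type relation $[a,-a]=0$ and Lemma~\ref{lemm:72} should force the total to be zero. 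Concretely, I expect to relate $\sum_a[a,-2a]$ to $\sum_a[a,-a]=0$ after a change of variables, or to show the sum equals its own negative under $a\mapsto -a$ and hence vanishes.

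The main obstacle will be the second identity: unlike the first, it does not collapse by a one-line symbol relation, and the correct bookkeeping of which blow-up relation to apply and which substitution makes the reindexed sums match is delicate. The risk is that a naive application of $(\mathbf B)$ produces symbols $[3a,-2a]$ whose second argument is not a clean multiple that reindexes back to the original form, so I may need to chain two blow-up relations or use Lemma~\ref{lemm:72} to convert a troublesome symbol into $[*,0]$ terms that then cancel in pairs via the $a\mapsto -a$ symmetry. Verifying that every term genuinely cancels, and separately checking the small primes $p=3$ (and possibly $p=5$) where some of the generic inequalities like $a\ne -2a$ or $a_1\ne a_2$ degenerate, is where the real work lies.
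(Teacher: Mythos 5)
Your proof of the first identity is complete and coincides with the paper's: use $[a,a]=[a,0]$ and pair $a$ with $-a$, so each of the $(p-1)/2$ pairs contributes $\delta$ by \eqref{eqn.delta}.

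The second identity is where your proposal has a genuine gap, and neither of the completion routes you sketch can work as stated. In $\cB_2(\Z/p\Z)$ there is no relation $[-a_1,a_2]=-[a_1,a_2]$ (that relation is only imposed in the antisymmetrized quotient $\cB_2^-$), so the substitution $a\mapsto -a$ leaves $\sum_a[a,-2a]$ literally invariant rather than anti-invariant: no sign ever appears, and the ``sum equals its own negative'' strategy is dead on arrival. Likewise, a change of variables cannot relate $\sum_a[a,-2a]$ to $\sum_a[a,-a]$: writing $S_\beta:=\sum_{a\in(\Z/p\Z)^\times}[a,\beta a]$, reindexing the summation variable (together with the symmetry of symbols) only yields $S_\beta=S_{\beta^{-1}}$; it can never change the ratio $-2$ into $-1$. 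Finally, your blow-up decomposition $[a,-2a]=[a,-3a]+[3a,-2a]$, summed over $a$ and reindexed, gives $S_{-2}=S_{-3}+S_{-2/3}$, an identity relating three unknown quantities that does not close up.

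The missing idea is a \emph{pairwise cancellation inside the sum}, which the paper extracts from Lemma \ref{lemm:74} with $b=a$ (equivalently, two applications of relation $(\mathbf{B})$ plus Lemma \ref{lemm:72}): for $a\in(\Z/p\Z)^\times$,
\[
[a,0]=[a,a]+[-a,2a]+[-2a,a],
\]
and since $[a,a]=[a,0]$ this gives
\[
[a,-2a]+[-a,2a]=[-2a,a]+[-a,2a]=0.
\]
The summand of $\sum_b[b,-2b]$ indexed by $b=-a$ is exactly $[-a,2a]$, so the summands indexed by $a$ and $-a$ cancel in pairs, and the sum vanishes. Note that this argument needs only $2a\ne 0$, so it covers every odd prime, including $p=3$ (where it shows $\delta=0$ outright); no separate small-prime verification is required.
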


\begin{proof}
We pair summands indexed by $a$ and $-a$ and use $[a,a]=[a,0]$ and
the definition of $\delta$ to get the first equality.
Also, from
\[ [a,0]=[a,a]+[-a,2a]+[-2a,a] \]
follows the vanishing of pairs of summands in the second equality.
\end{proof}

\begin{lemm}
Let $\beta$, $\beta'$, $\beta''\in (\Z/p\Z)^\times\setminus \{-1\}$ with
\[ \beta'=-\beta^{-1}-1\qquad\text{and}\qquad \beta''=-(\beta+1)^{-1}. \]
Then
\[
\sum_{a\in (\Z/p\Z)^\times} [a,\beta a]+[a,\beta' a]+[a,\beta'' a]=\frac{p-1}{2}\delta.
\]
Furthermore, if $\beta=\beta'=\beta''$ then
\[
\sum_{a\in (\Z/p\Z)^\times} [a,\beta a]=\frac{p-1}{6}\delta.
\]
\end{lemm}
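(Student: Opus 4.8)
The plan is to derive both identities from the three-term relation of Lemma~\ref{lemm:74}, namely $[a,0]=[a,b]+[-b,a+b]+[-a-b,a]$, valid whenever $a,b\in(\Z/p\Z)^\times$ and $a+b\ne 0$. I would specialize to $b=\beta a$; since $\beta\ne -1$ we have $a+b=(1+\beta)a\ne 0$, so the relation applies for every $a\in(\Z/p\Z)^\times$. The crucial point is that each of the three terms can be put into the normalized shape $[c,\gamma c]$: the first is already $[a,\beta a]$; factoring $c=-\beta a$ out of the second term produces exactly $[c,\beta' c]$ with $\beta'=-\beta^{-1}-1$; and factoring $c=-(1+\beta)a$ out of the third produces $[c,\beta'' c]$ with $\beta''=-(\beta+1)^{-1}$. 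This is precisely why $\beta'$ and $\beta''$ are defined as they are, and it is the heart of the computation.

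For the first identity I would sum the rewritten relation over all $a\in(\Z/p\Z)^\times$. Because $a\mapsto -\beta a$ and $a\mapsto -(1+\beta)a$ are bijections of $(\Z/p\Z)^\times$, reindexing converts the second and third sums into $\sum_a[a,\beta' a]$ and $\sum_a[a,\beta'' a]$. On the left, pairing $a$ with $-a$ and invoking the definition \eqref{eqn.delta} of $\delta$ gives $\sum_a[a,0]=\tfrac{p-1}{2}\delta$, and combining these yields the asserted equation.

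For the second identity I would first note that $\beta=\beta'=\beta''$ forces $\beta^2+\beta+1=0$, so $\beta$ is a primitive cube root of unity and $-\beta$ has order $6$ in $(\Z/p\Z)^\times$ (hence $p\equiv 1\pmod 6$, $p\ge 7$). Under this hypothesis the rewritten relation at $a$ contributes terms $[c,\beta c]$ with $c$-values $a,-\beta a,\beta^2 a$, and the same relation at $-a$ contributes $c$-values $-a,\beta a,-\beta^2 a$; together these six values are exactly the $\langle-\beta\rangle$-orbit of $a$. Adding the two relations and recognizing $\delta$ on the left shows that, for each orbit, $\sum_{c\in\,\mathrm{orbit}}[c,\beta c]=\delta$. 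Summing over the $(p-1)/6$ orbits partitioning $(\Z/p\Z)^\times$ then gives $\sum_a[a,\beta a]=\tfrac{p-1}{6}\delta$.

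The step demanding genuine care, and the main obstacle, is this last one. It is tempting to argue from the first identity that $3\sum_a[a,\beta a]=\tfrac{p-1}{2}\delta$ and then \emph{divide by} $3$; but $\cB_2(\Z/p\Z)$ may carry $3$-torsion, so this division is not justified. Organizing the sum into orbits under the order-$6$ group $\langle-\beta\rangle$ sidesteps the division entirely and produces the integral coefficient $\tfrac{p-1}{6}$ directly. I would verify that $-\beta$ acts freely with orbits of size exactly $6$ (valid for $p\ge 7$), so that the orbit count is indeed $(p-1)/6$ and the bookkeeping is exact.
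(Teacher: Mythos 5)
Your proof is correct and takes essentially the same approach as the paper: the paper's terse identification of ``pairs'' and ``$6$-tuples'' of summands with $\delta$ is exactly your argument, since its six-term expression for $\delta$ is nothing but Lemma~\ref{lemm:74} applied at $a$ and $-a$ and added, and your $\langle -\beta\rangle$-orbit decomposition is precisely the paper's grouping into $6$-tuples in the cube-root-of-unity case. Your explicit remark about avoiding division by $3$ (possible torsion) correctly explains why both arguments work integrally rather than by dividing the first identity.
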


\begin{proof}
We identify pairs of summands in the first expression with $\delta$.
We have $\beta=\beta'=\beta''$ if and only if $\beta$ is a primitive cube root
of unity.
Then we may identify $6$-tuples of summands with $\delta$ to get the
second equality.
\end{proof}

\begin{proof}[Proof of Theorem \ref{theo:pn}]
For $\mathrm{(i)}$, let $p\ge 5$.
We partition $(\Z/p\Z)^\times\setminus \{1,-1\}$ into
$\{-2,-1/2\}$,
the primitive cube roots of unity (which exist only when $p\equiv 1$ mod $3$),
and $6$-element sets
\[ \{\beta,\beta',\beta'',\beta^{-1},\beta'^{-1},\beta''^{-1}\}, \]
with distinct $\beta$, $\beta'$, $\beta''$ as above.
We take a subset 
$$
I\subset (\Z/p\Z)^\times\setminus \{1,-1\},
$$ 
to
consist of one of $-2$, $-1/2$,
one primitive cube root of unity if it exists, and
$\beta$, $\beta'$, $\beta''$ from every $6$-element set as above.
Then $\delta$ from \eqref{eqn.delta} satisfies
\begin{align*}
\frac{(p-1)(p-2)}{6}\delta&=\sum_{\beta=1}^{p-3}\sum_{a\in (\Z/p\Z)^\times} [a,\beta a] \\
&=
\sum_{\beta\in I} \sum_{a\in (\Z/p\Z)^\times} [a,(\beta-1)a]+[a,(\beta^{-1}-1)a] \\
&=\sum_{\beta\in I} \sum_{a\in (\Z/p\Z)^\times} [a,\beta a] \\
&=\frac{(p-1)(p-5)}{12}\delta.
\end{align*}

It follows that $\delta$ is annihilated by $(p^2-1)/12$. 
Next we prove annihilation by $(p^2-1)/8$, and thus by $(p^2-1)/24$ as claimed.
This is an adaptation of the previous argument:
take 
$$
J\subset (\Z/p\Z)^\times \setminus  \{1,-1\}
$$ 
to consist of
one square root of $-1$ when $p\equiv 1$ mod $4$ as well as
$\beta$ and $-\beta$ from every $4$-element set
\[ \{\beta, -\beta, \beta^{-1}, -\beta^{-1}\}. \]
From
\begin{align*}
\frac{(p-1)^2}{4}\delta & =\sum_{\beta=1}^{p-3} \sum_{a\in (\Z/p\Z)^\times} [a, \beta a]  \\
 & =\sum_{\beta\in J}\sum_{a\in (\Z/p\Z)^\times} [a, \beta a]     =\frac{(p-1)(p-3)}{8}  \delta
\end{align*}
we get the desired conclusion.

For $\mathrm{(ii)}$, we treat composite $N$, as in the proof of \cite[Prop.\ 3.2]{HKTsmall}:
We recall that, for $a,b$ with $\gcd(a,b,N)=1$, we have
$$
\langle a,b\rangle= \begin{cases}  [a,b] & \text{ when both } a,b\neq 0, \\
\frac{1}{2} [a,0] & \text{ when } b=0.
\end{cases}
$$
In this case, we work with
$$
\delta(a,b) := \langle a,b\rangle +\langle -a,b\rangle+\langle a,-b\rangle + \langle -a,-b\rangle \in \cB_2(\bZ/N\bZ).
$$
We observe that $\delta(a,b)$ satisfies the blow-up relation ($\mathbf{M}$), thus
$$
S:=\sum_{a,b} \delta(a,b) = 2S.
$$
It follows that $S=0$ in $\cB_2(\bZ/N\bZ)$.
On the other hand, $\delta(a,b)$ 
is seen to be invariant under $\mathrm{SL}_2(\bZ/N\bZ)$. 
This implies that $\delta(a,b)$ is torsion in $\cB_2(\bZ/N\bZ)$ 
(annihilated by the number of summands in $S$). 
Substituting $b=0$, we
$[a,0]+[-a,0]=0$ in $\cB_2(\bZ/N\bZ)\otimes \bQ$.
\end{proof}

\

The following theorem settles Conjectures 8 and 9 of \cite{kontsevichpestuntschinkel}:

\begin{theo}
\label{prop:001N}
Let $n\ge 3$. \\
$\mathrm{(i)}$ Let $p$ be a prime. Then
\[
[0,0,1,\dots] \in \cB_n(\bZ/p\bZ)
\]
is zero when $p\le 5$, and is annihilated by $(p^2-1)/24$ when $p\ge 7$. \\
$\mathrm{(ii)}$ Let $G$ be a finite abelian group.
Any element of the form
\[
[0,0,\dots] \in \cB_n(G)
\]
is a torsion element.
\end{theo}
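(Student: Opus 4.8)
The plan is to deduce Theorem \ref{prop:001N} from Theorem \ref{theo:pn} by transporting the two-dimensional relations into $\cB_n(G)$ and then gaining control of the single, asymmetric class $[0,0,1,\dots]=[1,0,\dots,0]$, rather than only the symmetric sum $\delta$ of \eqref{eqn.delta} that Theorem \ref{theo:pn} already controls. The whole point is that for $n\ge 3$ there are blow-up relations unavailable in $\cB_2$, and these should let us pass from the symmetric statement to the single symbol.

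First I would record those extra relations. Applying $(\mathbf B)$ to a pair $(a,a)$ with trivial spectators gives $[a,a,0,\dots,0]=[0,a,0,\dots,0]=[a,0,\dots]$, while applying it to a pair $(c,0)$ shows that any symbol containing a pair $c,-c$ with $c\neq 0$ vanishes, so that $[c,-c,\dots]=0$ even with no trivial entry present. Combining these, a blow-up of $[2a,a,a]$ on the pair $(2a,a)$ gives $[2a,-a,a]+[a,a,a]$, whose first term has a $\pm$ pair and so vanishes; reading the same class through the pair $(a,a)$ yields $[2a,a,0]=[a,0,\dots]$. Feeding this into the appended form of Lemma \ref{lemm:72} gives $[a,-2a,0]=0$, hence $[2a,-a,0]=0$, and then $[2a,0,\dots]=[a,0,\dots]$; thus $[a,0,\dots]$ is already invariant under $a\mapsto 2a$, and analogous manipulations produce further multiplier relations.

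For part $\mathrm{(i)}$ it remains to upgrade these to the sharp annihilator. I would rerun the counting in the proof of Theorem \ref{theo:pn}(i) at the level of the single class: the appended order-three identity (with $\beta,\beta',\beta''$ cyclically permuted by $\beta\mapsto -\beta^{-1}-1$) and the order-four identity persist in $\cB_n(\bZ/p\bZ)$, and I expect the new vanishings $[c,-c,\dots]=0$ to be exactly what lets one isolate $[a,0,\dots]$ from the sum. Organizing $(\bZ/p\bZ)^\times$ into the $\{-2,-1/2\}$, cube-root and six-element orbits should give annihilation by $(p^2-1)/12$, the four-element orbits $(p^2-1)/8$, and the gcd $(p^2-1)/24$; for $p\le 5$ the relevant orbits are empty and the class vanishes. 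The main obstacle is precisely this desymmetrization: for $\delta$ the scaling symmetry $a\mapsto ka$ collapses the double sum to a multiple of one invariant class, whereas for $[a,0,\dots]$ that symmetry is broken, so the count of how many summands equal the target class must be redone carefully to avoid losing a factor of two and landing on the weaker bound $(p^2-1)/12$.

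For part $\mathrm{(ii)}$ I would argue directly in $\cB_n(G)$, following the composite-$N$ part of the proof of Theorem \ref{theo:pn}. Using the $\mu$-normalized bracket, I would attach to the class $[0,0,\dots]$ a symmetrization over the sign changes (and automorphisms of $G$) of the nontrivial entries, arranged so that, in its two trivial slots, it obeys the modular relation $(\mathbf M)$; then the total sum $S$ of all such brackets satisfies $S=2S$ and hence $S=0$. Since the symmetrized bracket is invariant under the ambient special linear group, all of its terms are equal, so each—and therefore $[0,0,\dots]$ itself—is annihilated by the number of summands and is torsion. The difficulty for general $G$ is to define the symmetrization so that the doubling $S=2S$ and the linear invariance hold simultaneously for all $n\ge 3$, and to verify that the two fixed trivial characters obstruct neither property.
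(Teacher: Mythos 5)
Your preparatory relations are correct, and they are in fact exactly the two relations the paper's proof rests on: $(\mathbf B)$ applied to a pair $(a,a)$ gives $[a,a,\dots]=[0,a,\dots]$, and $(\mathbf B)$ applied to a pair $(c,0)$ gives $[c,-c,\dots]=0$. What you miss is the one-line way to combine them with Theorem \ref{theo:pn}. Since the proof of Theorem \ref{theo:pn} only ever manipulates the first two slots, its conclusion holds with \emph{arbitrary} fixed spectator entries: $[a,0,c,\dots]+[-a,0,c,\dots]$ is annihilated by $(p^2-1)/24$ (resp.\ vanishes for $p\le 5$, resp.\ is torsion over $\Z/N\Z$) for any spectators $c,\dots$. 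Now substitute $c=a$: the summand $[-a,0,a,\dots]$ contains the pair $(a,-a)$ and vanishes by your second relation, while $[a,0,a,\dots]$ equals $[0,0,a,\dots]$ by your first relation. Thus the single class $[0,0,a,\dots]$ \emph{is} the symmetric sum of Theorem \ref{theo:pn} for this well-chosen spectator, and both parts of Theorem \ref{prop:001N} follow immediately, with the sharp constant, with arbitrary trailing entries, and with no re-counting. (This substitution is the entire content of the paper's proof; for part (ii) one first reduces to cyclic $G$.)

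What you propose instead leaves the decisive step unproved in both parts. For (i), the ``re-run counting'' is never carried out, and you yourself flag the unresolved factor-of-two problem; the obstacle is real, because the lemmas feeding the count (Lemma \ref{lemm:72} through the six-term lemma) are appended two-slot identities, whose output with all-zero spectators is inherently the symmetric combination $\delta$. Indeed the single class can be non-torsion in $\cB_2$ (e.g.\ $[1,0]$ generates $\cB_2(\Z/3\Z)\simeq\Z$), so any valid argument must inject a genuinely three-slot relation at a definite point of the count, and your sketch never specifies where or how. For (ii) the situation is worse: the symmetrization $S$ is never defined, and the mechanism you describe cannot produce the doubling $S=2S$. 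In the composite-$N$ part of the proof of Theorem \ref{theo:pn}, the doubling comes from $(\mathbf M)$-moves on the two \emph{active} slots as they range over all coprime pairs $(a,b)$; in your setting the two active slots are frozen at $(0,0)$, where such moves are trivial, and symmetrizing over signs or automorphisms of the \emph{spectators} does not interact with them at all. Moreover, ``arranging'' a normalized bracket to satisfy $(\mathbf M)$ at a pair of zero slots forces that bracket to equal twice itself, i.e.\ you would be assuming the rational vanishing of precisely the class you are trying to control. Even granting a torsion $S$, extracting $[0,0,\dots]$ from it is once again the desymmetrization problem you have not solved.
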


\begin{proof}
For $\mathrm{(ii)}$
it suffices to consider cyclic $G=\bZ/N\bZ$.
Theorem \ref{theo:pn} $\mathrm{(ii)}$ gives,
for $a\in (\bZ/N\bZ)^\times$, that
$$
[a,0,c \ldots] +[-a,0,c,\ldots]
$$
is torsion in $\cB_n(\bZ/N\bZ)$.
Substituting $c=a$, and using that
$$
[a,0,a,\dots]=[0,0,a,\dots] \in  \cB_n(\bZ/N\bZ)
$$
and
$$
[-a,0,a, \ldots ] =0\in  \cB_n(\bZ/N\bZ)
$$
we obtain the result.
We obtain $\mathrm{(i)}$ similarly, from Theorem \ref{theo:pn} $\mathrm{(i)}$.
\end{proof}

\begin{proof}[Proof of Theorem~\ref{theo:compare}]
The assertion for $\mu^-$ follows immediately from the
vanishing of all $[0,a_2,\ldots,a_n]$, respectively $\langle 0,a_2,\ldots,a_n\rangle$ 
in $\cB_n^-(G)\otimes \bQ$, respectively $\cM_n^-(G)\otimes \bQ$.

To obtain the assertion for $\mu$, we combine
Theorem \ref{prop:001N} $\mathrm{(ii)}$ with
analogous relations in $\cM_n(G)$, stated at the
beginning of Section 3 of \cite{kontsevichpestuntschinkel},
to show directly that $\mu$ induces an isomorphism after
tensoring with $\bQ$.
\end{proof}

\section{Interpretation via lattices}
\label{sect:lattice}
As before, $G$ is a finite abelian group $G$;
we denote by $A$ the character group of $G$.
Our starting point is the free abelian group on triples 
$$
(\mathbf{L}, \chi, \Lambda),
$$
where 
\begin{itemize}
\item $\mathbf{L}\simeq \bZ^n$ is an $n$-dimensional lattice,
\item  $\chi\in \mathbf{L}\otimes A $ is an element inducing, by duality, a surjection $\mathbf{L}^\vee \to  A$,
\item $\Lambda$ is a basic cone, i.e., a simplicial cone
spanned by a basis of $\mathbf{L}$.
\end{itemize}
Let $\mathbf{T}$ be the quotient of this group by the equivalence relation: two triples are equivalent if they differ by the action of 
$\mathrm{GL}_n(\bZ)$. 
There is a natural map
$$
\begin{array}{ccc}
 \mathbf{T}                     & \to           &  \cS_n(G), \\
           (\mathbf{L}, \chi, \Lambda) & \mapsto   & [a_1,\ldots, a_n],
\end{array}
$$
defined by decomposing 
\begin{equation}
\label{eqn.chi}
\chi=\sum_{i=1}^n e_i\otimes a_i, \quad a_i\in A, 
\end{equation}
where $\{e_1,\ldots, e_n\}$ is a basis of $\Lambda$. The symmetry property \eqref{eqn:sym} is precisely the ambiguity in the 
order of generating elements of $\Lambda$. 
Imposing scissor-type relations \cite[(4.4)]{kontsevichpestuntschinkel}  on $\mathbf{T}$, we obtain a diagram
\[
\xymatrix@R=8pt{
\mathbf{T} \ar[dr]^\psi \ar@{>>}[dd]_{\mathsf s}      \\
& \mathcal{M}_n(G) \\
\mathbf{T}/(\text{scissor-type relations})\ar[ur]^{\sim}
}
\]

We propose a similar group $\widetilde{\mathbf{T}}$, based on triples
$$
(\mathbf{L},\chi,\Lambda'),
$$ 
where now $\Lambda'$ is a
smooth cone of \emph{arbitrary} dimension (i.e., one spanned by part of
a basis of $\mathbf{L}$), such that when we let
$\mathbf{L}'$ denote the sublattice of $\mathbf{L}$ spanned by $\Lambda'$, we have
\begin{equation}
\label{eqn.chicondition}
\chi\in \mathrm{Im}(\mathbf{L}' \otimes A\to  \mathbf{L}\otimes A).
\end{equation}
Again, we impose the relations coming from the evident $\mathrm{GL}_n(\Z)$-action.
There is a natural map
$$
\begin{array}{ccl}
 \widetilde{\mathbf{T}}                     & \to       &  \cS_n(G), \\
           (\mathbf{L}, \chi, \Lambda') & \mapsto   &[a_1,\ldots, a_n].
\end{array}
$$

We introduce {\bf Subdivision relations} on $\widetilde{\mathbf{T}}$:
\begin{itemize}
\item[($\mathrm{\bf S}$)] 
for a face $\Lambda''$ of $\Lambda'$ of dimension at least $2$,
$$
\Lambda''=\R_{\ge 0}\langle e_1,\dots,e_r\rangle\subset 
\Lambda'=\R_{\ge 0}\langle e_1,\dots,e_s\rangle,
$$
consider the star subdivision $\Sigma^*_{\Lambda'}(\Lambda'')$,
consisting of the $2^r-1$ cones spanned by
$e_1+\dots+e_r$, $e_{r+1}$, $\dots$, $e_s$, and
all proper subsets of $\{e_1,\dots,e_r\}$.
Then 
\begin{align}
(\mathbf{L},\chi,\Lambda')&=
\sum_{\substack{\widetilde{\Lambda}'\in \Sigma^*_{\Lambda'}(\Lambda'')\\
\chi\in \mathrm{Im}(\widetilde{\mathbf{L}}' \otimes A\to  \mathbf{L}\otimes A)}} (-1)^{\dim(\Lambda')-\dim(\widetilde{\Lambda}')}(\mathbf{L},\chi,\widetilde{\Lambda}'),
\label{eqn.subdivision1} \\
(\mathbf{L},\chi,\Lambda')&=(\mathbf{L},\chi,\Lambda),
\label{eqn.subdivision2}
\end{align}
for a basic cone $\Lambda$, having $\Lambda'$ as a face.
\end{itemize}

We have:
\begin{equation}
\begin{split}
\label{eqn.stilde}
\xymatrix@R=8pt{
\widetilde{\mathbf{T}} \ar[dr]^{\tilde\psi} \ar@{>>}[dd]_{\tilde{\mathsf s}}      \\
& \cB_n(G) \\
\widetilde{\mathbf{T}}/(\text{subdivision relations})\ar[ur]^{\sim}
}
\end{split}
\end{equation}

\begin{lemm}
\label{lem.subdivisionr2}
The subdivision relations are generated by
\eqref{eqn.subdivision1} for $r=2$, and
\eqref{eqn.subdivision2}.
\end{lemm}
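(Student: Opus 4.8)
The plan is to prove the statement by induction on $r$, reducing every instance of \eqref{eqn.subdivision1} with $r\ge 3$ to instances with $r=2$. Since \eqref{eqn.subdivision2} and the $r=2$ case of \eqref{eqn.subdivision1} occur on both sides of the asserted equality of subgroups, the only content is that the star-subdivision relation at a face $\Lambda''=\R_{\ge 0}\langle e_1,\dots,e_r\rangle$ of dimension $r\ge 3$ lies in the subgroup generated by the $r=2$ relations together with \eqref{eqn.subdivision2}. First I would note that the generators $e_{r+1},\dots,e_s$ are inert: they belong to the spanning set of every one of the $2^r-1$ cones of $\Sigma^*_{\Lambda'}(\Lambda'')$, so the characters $a_{r+1},\dots,a_s$ are always accommodated and the admissibility condition \eqref{eqn.chicondition} constrains only $a_1,\dots,a_r$ (it prescribes which of them must coincide). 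Hence it suffices to run the combinatorial argument with $e_{r+1},\dots,e_s$ carried along unchanged in every intermediate cone.

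For the inductive step I would split the new ray as $v=e_1+\dots+e_r=v'+e_r$ with $v'=e_1+\dots+e_{r-1}$. Applying the $(r-1)$-dimensional star-subdivision at $\langle e_1,\dots,e_{r-1}\rangle$ to $\Lambda'$ yields cones each containing $v'$ and $e_r$; to each such cone I apply the $2$-dimensional star-subdivision at the face $\langle v',e_r\rangle$, adding $v=v'+e_r$. Among the resulting maximal cones one recovers all target cones $\langle v,e_1,\dots,\widehat{e_i},\dots,e_r\rangle$ with $i\le r-1$, together with spurious cones that retain $v'$, namely $\langle v,v',e_1,\dots,\widehat{e_i},\dots,e_{r-1}\rangle$ for $i\le r-1$. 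The single remaining target cone $\langle v,e_1,\dots,e_{r-1}\rangle$ (the one omitting $e_r$) is then reconciled by applying the $(r-1)$-dimensional star-subdivision at $\langle e_1,\dots,e_{r-1}\rangle$ to that cone alone: this reproduces exactly the spurious cones, so subtracting this relation cancels them. By the inductive hypothesis every $(r-1)$-dimensional star-subdivision relation already lies in the subgroup generated by the $r=2$ relations and \eqref{eqn.subdivision2}; the displayed composite therefore expresses \eqref{eqn.subdivision1} for $r$ through the case $r=2$. All cones that appear are smooth, since $\{v,e_1,\dots,e_{r-1}\}$ and the analogous intermediate sets are unimodular changes of $\{e_1,\dots,e_r\}$.

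The main obstacle is the bookkeeping of the inclusion--exclusion signs $(-1)^{\dim\Lambda'-\dim\widetilde\Lambda'}$ together with the admissibility conditions \eqref{eqn.chicondition}, which decide precisely which lower-dimensional faces of each star subdivision enter the sum. In the basis adapted to each intermediate cone the admissibility of a sub-cone translates into a prescribed set of coincidences among $a_1,\dots,a_r$, and I must verify that these coincidences, and their attached signs, propagate consistently through the change of basis performed at each subdivision, so that the admissible lower-dimensional terms produced by the composite agree term by term with those in the target relation. Here \eqref{eqn.subdivision2} is invoked to complete any non-maximal cone arising in the process to a basic cone, guaranteeing that every intermediate symbol is a legitimate generator of $\widetilde{\mathbf T}$. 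The computation is finite and explicitly checkable --- for $r=3$ the identity is a signed combination of four instances of the $r=2$ relation, and one checks both the generic case (all $a_i$ distinct) and the degenerate cases where some $a_i$ coincide --- but organizing the general alternating sum so that every spurious and every lower-dimensional contribution cancels is the delicate point.
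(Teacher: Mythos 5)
Your proposal is correct and follows essentially the same route as the paper: the paper's proof is an induction on $r$ (citing the analogous argument in \cite[Prop.\ 2.1]{HKTsmall}) showing that each relation \eqref{eqn.subdivision1} with $r>2$ is generated by instances with smaller $r$, which is exactly your reduction via composing the star subdivision at $\langle e_1,\dots,e_{r-1}\rangle$ with the $2$-dimensional subdivisions at $\langle v',e_r\rangle$ and cancelling the spurious cones. Your explicit decomposition of the inductive step (and your identification of the sign/admissibility bookkeeping as the only remaining check) supplies more detail than the paper itself records.
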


\begin{proof}
As in the proof of \cite[Prop.\ 2.1]{HKTsmall}, we show
inductively that the relations \eqref{eqn.subdivision1} for given $r>2$
are generated by \eqref{eqn.subdivision1} with smaller values of $r$.
\end{proof}

In \eqref{eqn.stilde} we have an obvious map from $\cB_n(G)$
to the quotient of $\widetilde{\mathbf{T}}$ by the subdivision relations,
sending $[a_1,\dots,a_n]$ to a triple $(\mathbf{L},\chi,\Lambda)$ with
$\Lambda$ a basic cone and $\chi$ given by the formula \eqref{eqn.chi}.
It is readily verified that this respects the relation \eqref{keyrelation},
and that the bottom map in \eqref{eqn.stilde} is an isomorphism.

As in \cite[Section~\ 4]{kontsevichpestuntschinkel} we extend the definition of
$\tilde\psi(\mathbf{L},\chi,\Lambda')$ to the case of a simplicial cone
$\Lambda'$, satisfying \eqref{eqn.chicondition} with
$\mathbf{L}'=\mathbf{L}\cap \Lambda'\otimes \R$.
We choose a subdivision by smooth cones and sum,
with signs, the contributions from the cones, not contained in any proper face
of $\Lambda'$.
Here, as in \eqref{eqn.subdivision1}, the signs are given by
codimension, and contributions are only taken from summands satisfying
the analogous condition to \eqref{eqn.chicondition}.

Now we can define Hecke operators
$$
T_{\ell,r} : \cB_n(G)\to \cB_n(G),
$$
where $\ell$ is a prime not dividing the order of $G$ and $1\le r\le n-1$,
following the construction in  \cite[Section 6]{kontsevichpestuntschinkel}, as a sum over certain
overlattices:
\[ T_{\ell,r}(\tilde\psi(\mathbf{L},\chi,\Lambda')):=
\sum_{\substack{\mathbf{L}\subset \widehat{\mathbf{L}}\subset
\mathbf{L}\otimes\Q \\ \widehat{\mathbf{L}}/\mathbf{L}\simeq (\Z/\ell\Z)^r}}
\tilde\psi(\widehat{\mathbf{L}},\chi,\Lambda'). \]

\bibliographystyle{plain}
\bibliography{number}

\begin{thebibliography}{1}

\bibitem{HKTsmall}
B.~Hassett, A.~Kresch, and Yu. Tschinkel.
\newblock Symbols and equivariant birational geometry in small dimensions,
  2020.
\newblock {\tt arXiv:2010.08902}.

\bibitem{kontsevichpestuntschinkel}
M.~Kontsevich, V.~Pestun, and Yu. Tschinkel.
\newblock Equivariant birational geometry and modular symbols, 2019.
\newblock {\tt arXiv:1902.09894}, to appear in J. Eur. Math. Soc.

\bibitem{KT}
M.~Kontsevich and Yu. Tschinkel.
\newblock Specialization of birational types.
\newblock {\em Invent. Math.}, 217(2):415--432, 2019.

\bibitem{BnG}
A.~Kresch and Yu. Tschinkel.
\newblock Equivariant birational types and {B}urnside volume, 2020.
\newblock {\tt arXiv:2007.12538}.

\end{thebibliography}

\end{document}